\documentclass[11pt]{amsart}
\usepackage{graphicx}
\usepackage{latexsym}
\usepackage{amsfonts,amsmath,amssymb}
\newtheorem{theorem}{Theorem}[section]																
\newtheorem{lemma}[theorem]{Lemma}

\usepackage{color}
%\usepackage{lineno}
%\linenumbers*[1]

\def\bi{\par\bigskip\noindent}

\def\la{\lambda}

\def\la{\lambda}
\def\part{\partial}

\def\b1{\bold 1}

\newcommand{\beq}{\begin{equation}}
\newcommand{\eeq}{\end{equation}}

\newcommand{\id}{\mathrm{id}}

\theoremstyle{remark}

\numberwithin{equation}{section}
\date{\today}

\begin{document}

\title[Product of two cycles]{Counting pairs of cycles whose product is a permutation with restricted cycle lengths}
\author {Mikl\'os B\'ona and Boris Pittel}
\address{Department of Mathematics, University of Florida, Gainesville, FL 32611}
\email{bona@ufl.edu}
\address{Department of Mathematics, Ohio State, University, Columbus, OH 43210}
\email{pittel.1@osu.edu}

%whose product has only odd (even) cycles.}
\date{}
%\author{Boris Pittel}
%\address{Department of Mathematics, Ohio State University, 231 West 18-th Avenue, Columbus OH 43210-1175}
%\email{pittel.1@osu.edu}
\begin{abstract} 
We find exact and asymptotic formulas for the number of pairs $(s,z)$ of $N$-cycles $s$ and $z$ such that all cycles of the product $s\cdot z$ have lengths from a given set of integers. We then apply one of these results to prove a surprisingly high
lower bound for the number of permutations whose block transposition distance from the identity is at least
$(N+1)/2$. 
\end{abstract}

%\subjclass{60C05; 05C05, 92B10}

\maketitle

\section{Introduction}
\subsection{Permutations with restricted cycle lengths}

We are interested in counting {\it some\/} of the permutations of a set $[N]$ whose cycle lengths belong to a given subset $A\subseteq \{1,2,\dots\}$. Permutations of this kind were studied by Sachkov \cite{Sach},
under the name of $A$-permutations. We stress "some", since our focus is on permutations of $[N]$ that are products of pairs of $N$-long cycles. Some special cases have direct consequences
in the theory of biologically motivated sorting algorithms.  We will explore a very surprising one of these, 
related to sorting a permutation by block transpositions, in Section \ref{btsorting}.  One of our permutation-counting 
results will imply the existence of a high number of permutations of length $N$ that take at least $(N+1)/2$
block transpositions to sort. This is remarkable, because for most values of $N$, there are no known examples
of permutations of length $N$ that would take more than that many block transpositions to sort.

Let $\Bbb P_N(A)$ denote the total number of {\it all\/} such 
permutations. It is well known (cf. B\'ona \cite{Bona}) that
\begin{equation}\label{1*}
\Bbb P_N(A)=N! \cdot [x^N] \exp(F_A(x)),\quad F_A(x):=\sum_{r\in A}\tfrac{x^r}{r}.
\end{equation}
Equivalently, $p_N(A)$, the probability that the uniformly random permutation of $[N]$ has all its cycle lengths from $A$,
is given by
\begin{equation}\label{2*}
p_N(A)=[x^N] \exp(F_A(x)).
\end{equation}
We will show that this leads to the explicit formulas
\begin{equation}\label{new?}
\begin{aligned}
p_k(A)&=\sum_{\mu_r\ge 0,\,r\in A^c\atop
\sum_r r\mu_r\le k}\prod_{r\in A^c}\tfrac{(-1)^{\mu_r}}{r^{\mu_r}\mu_r!},\\
p_{\ell}(A^c)&=\sum_{\sum_{r\in A^c}r \mu_r=\ell}\,\prod_{r\in A^c}\tfrac{1}{r^{\mu_r}\mu_r!},
\end{aligned}
\end{equation}
where  $A^c:=\{1,2,\dots\}\setminus A$ is the complement of $A$ in the set of all positive integers.
In the context of sorting algorithms, particularly relevant are the cases $A=\mathcal E$, the set of even numbers, and $A=\mathcal O$, the set of odd numbers.

The task of counting {\em all} such permutations is well resolved. It is well known \cite{Bona,stanleyec2} that
\begin{equation}\label{2***}
F_{\mathcal E}(x)=\log (1-x^2)^{-1/2},\quad F_{\mathcal O}(x)=\log\bigl(\tfrac{1+x}{1-x}\bigr)^{1/2}.
\end{equation}
It follows that
\begin{equation}\label{2**}
\begin{aligned}
p_N(\mathcal E)&=2^{-N}\binom{N}{N/2},%\sim \tfrac{1}{\sqrt{\pi N/2}},% 
\quad N\text{ even},\\
p_N(\mathcal O)&=\left\{\begin{aligned}
&2^{-N}\binom{N}{N/2},&&N\text{ is even},\\
&2^{-N+1}\binom{N-1}{(N-1)/2},&&N\text{ is odd}.\end{aligned}\right.
%\tfrac{1}{2^{2N}}\binom{2N}{N}+\tfrac{1}{2^{2(N-1)}}\binom{2(N-1)}{N-1}\sim\tfrac{2}{\sqrt{\pi N}}.
\end{aligned}
\end{equation}

We point out that the enumeration formulas that follow from \eqref{2***} and \eqref{2**} have known combinatorial proofs. The interested reader should consult the references given in the solution Exercise 5.10 in \cite{stanleyec2}. 

It is significantly more challenging to obtain a tractable counterpart of \eqref{2*} for $q_N(A)$, {\em the probability that the {\it product\/} of two independent, uniformly random  $N$-long cycles has all its cycle lengths from a given set} $A$. Our main result consists of two complementary parts. First, for an arbitrary $A\subset \{1,2,\dots\}$, we will 
get a sum-type formula for the numbers $q_N(A)$ in terms of the numbers $p_k(A)$ and  $p_{\ell}(A^c)$, including a wide range extension of well known results for $A=\{2,3,\dots \}$. While explicit, the expression does not lend itself easily to asymptotic analysis, since the summands have alternating signs. For two important cases, $A$ being either the set of even numbers or the set of odd numbers, we find a way to obtain the sum-type formulas with positive summands only, 
much more conducive to asymptotic computations. We will present these formulas and their proofs next.

\subsection{Products of Maximal Cycles}
The following theorem summarizes  the formulas we will obtain for $q_N(A)$.

\begin{theorem}\label{thm1*}
For all positive integers $N$, the following hold. 
\begin{enumerate} 
\item[({\bf a})] For a given $A\subset\{1,2,\dots\}$, 

\begin{multline}\label{4*}
q_N(A)=\tfrac{N}{N+1}
\biggl(\sum_{k+\ell=N \atop 0\leq k,\ell \leq N}(-1)^{\ell}\binom{N}{\ell}^{-1} p_k(A)\bigl(p_{\ell}(A^c)-p_{\ell-1}(A^c)\bigr)\biggr),\\
\end{multline}
where $p_0(A)=p_{0}(A^c)=1$, $p_1(A^c):=0$; for $k,\ell\ge 1$, $p_k(A),\,p_{\ell}(A^c)$ are given by \eqref{new?}. %Consequently, the equations \eqref{2**} and \eqref{4*} provide explicit formulas for $q_N(\mathcal E)$ and $q_N(\mathcalO)$. 
\item[({\bf b})]  For $A=\mathcal E$ (even positive integers) and $A=\mathcal O$ (odd positive integers), we have
\begin{equation}\label{5*}
\begin{aligned}
q_N(\mathcal E)&=\tfrac{2N}{(N+1) 2^N}\sum_{k+\ell=N/2,  \atop 0\leq k,\ell \leq N}\tfrac{1}{k}\binom{2(k-1)}{k-1}\binom{2\ell}{\ell}\biggl(1-\binom{N}{2\ell}^{-1}\biggr)\!,\,N\text{ even},\\
q_N(\mathcal O)&=\tfrac{N}{2^N(N+1)}\sum_{2k+2\ell=N,  \atop 0\leq k,\ell \leq N}\binom{2k}{k}\binom{2\ell}{\ell}\binom{N}{2\ell}^{-1}\\
&\quad+\tfrac{N}{2^{N-2}(N+1)}\sum_{2k+2\ell=N-2,  \atop 0\leq k,\ell \leq N}\binom{2k}{k}\binom{2\ell}{\ell}
\binom{N}{2\ell+1}^{-1},\,N\text{ even},\\
q_N(\mathcal O)&=\tfrac{N}{(N+1)2^{N-1}}\!\!\sum_{2\ell+2k=N-1,  \atop 0\leq k,\ell \leq N}\!\!\binom{2\ell}{\ell}\binom{2k}{k}\biggl(\!\binom{N}{2\ell}^{-1}\!\! +\binom{N}{2\ell+1}^{-1}\biggr),\,N\text{ odd}.
\end{aligned}
\end{equation}
\end{enumerate}
 In each of the above sums the dominant terms are those with the smallest {\it admissible\/} $k$.
Consequently $q_N(\mathcal E),\,q_N(\mathcal O)\sim (\pi N/2)^{-1/2}$, as $N\to\infty$.
\end{theorem}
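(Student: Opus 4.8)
\textbf{The plan is to prove formula \eqref{4*} first, then derive \eqref{5*} as a specialization, and finally extract the asymptotics.} The starting point is the known character-theoretic (or topological/map-counting) formula for the number of ways to write a fixed permutation as a product of two $N$-cycles. The quantity $q_N(A)$ is an average over the cycle type of $s\cdot z$, so I would write
\begin{equation}\label{plan1}
q_N(A)=\sum_{\lambda}\,[\lambda\text{ has all parts in }A]\cdot\Pr(s\cdot z\text{ has cycle type }\lambda),
\end{equation}
where the probability that the product of two independent uniform $N$-cycles has a prescribed cycle type is governed by the classical result (connected factorizations / the genus-series expansion) that the number of such factorizations depends on $\lambda$ only through the number of parts. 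The key simplification I expect to exploit is that $\Pr(s\cdot z=\pi)$ is a function of the number of cycles of $\pi$ alone; summing over all $\pi$ with a fixed number $\ell$ of cycles and all parts in $A^c$ (respectively $A$) is exactly what produces the factors $p_\ell(A^c)$ and $p_k(A)$ after separating $[N]$ into the ``$A$-part'' and the ``$A^c$-part'' of the product permutation.

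\textbf{Carrying out part (a).} First I would recall the exact count: the number of pairs of $N$-cycles whose product has exactly $m$ cycles is $\tfrac{N!\,N}{N+1}$ times a signed binomial coefficient, a statement equivalent to the fact that the relevant generating function has the $\binom{N}{\ell}^{-1}$ weights visible in \eqref{4*}. The factor $\tfrac{N}{N+1}$ and the inverse binomial coefficients are the signatures of this formula; the inclusion of both $p_\ell(A^c)$ and $p_{\ell-1}(A^c)$ reflects a telescoping that converts the ``number of cycles'' statistic into a convolution. Concretely, I would (i) condition on which elements of $[N]$ lie in cycles of the product whose length is in $A^c$, (ii) use \eqref{1*}--\eqref{2*} to recognize the $A$-part contribution as $p_k(A)$ and the $A^c$-part as a difference $p_\ell(A^c)-p_{\ell-1}(A^c)$, and (iii) collect the prefactor $\tfrac{N}{N+1}\binom{N}{\ell}^{-1}(-1)^\ell$ from the factorization count. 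The explicit expressions \eqref{new?} for $p_k(A)$ and $p_\ell(A^c)$ then follow by extracting $[x^k]\exp(F_A(x))=[x^k]\exp\bigl(\log\tfrac{1}{1-x}-F_{A^c}(x)\bigr)$ and expanding the exponential of $-F_{A^c}$ as a product over $r\in A^c$.

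\textbf{Deriving part (b) and the asymptotics.} For $A=\mathcal E$ and $A=\mathcal O$ I would substitute the closed forms \eqref{2***}--\eqref{2**} into \eqref{4*}: here $p_k(A)$ and $p_\ell(A^c)$ are central binomial coefficients times powers of $2$, so the alternating sum collapses. The crucial step is that for these two sets the differences $p_\ell(A^c)-p_{\ell-1}(A^c)$ and the inverse binomials recombine so that the $(-1)^\ell$ signs cancel against the sign hidden in $p_\ell-p_{\ell-1}$, leaving the manifestly positive sums in \eqref{5*}; verifying this cancellation case-by-case (according to the parities of $N$ and of the summation index) is where most of the bookkeeping lives. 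Finally, for the asymptotics I would isolate the smallest admissible $k$ (namely $k=1$), for which $\tfrac1k\binom{2(k-1)}{k-1}=1$ and the $\bigl(1-\binom{N}{2\ell}^{-1}\bigr)$ factor is essentially $1$; the leading contribution is then $\tfrac{2N}{(N+1)2^N}\binom{N-2}{N/2-1}\sim\tfrac{1}{2^N}\binom{N}{N/2}\sim(\pi N/2)^{-1/2}$ by Stirling, with the remaining terms ($k\ge2$) contributing lower-order corrections.

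\textbf{Main obstacle.} The hardest part will be step (ii)--(iii) of part (a): correctly identifying the factorization count for products of two $N$-cycles by number-of-cycles and tracking how the ``$-p_{\ell-1}(A^c)$'' term arises. This requires the precise normalization of the two-cycle-product formula (the $\tfrac{N}{N+1}$ and $\binom{N}{\ell}^{-1}$), and a clean combinatorial or generating-function argument showing why the product permutation's cycle-type distribution factors across the $A$/$A^c$ split. The sign cancellation in part (b) is delicate but ultimately mechanical once part (a) is in hand.
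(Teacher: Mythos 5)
Your plan for part (a) rests on a claim that is false: the probability $\Pr(s\cdot z=\pi)$ for two independent uniform $N$-cycles is \emph{not} a function of the number of cycles of $\pi$ alone. Already for $N=4$, a fixed permutation of cycle type $(2,2)$ arises with probability $1/18$, while one of type $(3,1)$ arises with probability $1/12$, although both have two cycles (the classical ``independent of the permutation'' results, such as Boccara's theorem, concern a product of an $N$-cycle with an $(N-1)$-cycle, not two $N$-cycles). For the same reason, even the exact distribution of the number of cycles of $s\cdot z$ --- which is indeed classical --- cannot determine $q_N(A)$, since being an $A$-permutation is not a function of the number of cycles; and your step (i), conditioning on which elements lie in cycles of $A^c$-length, has no product structure behind it: the cycle types of the product on the two parts are not independent. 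The paper's proof works precisely because it keeps track of the full cycle type: by the Frobenius-type formula only hook shapes $\la^*$ survive, Lemma \ref{lem*} gives $\chi^{\la^*}=(-1)^{\la^1}[x^{\la_1}]\tfrac{x}{1-x}\prod_r(x^r-1)^{\nu_r}$, and summing this over all $A$-types converts $\prod_r(x^r-1)^{\nu_r}$ into $N!\,[y^N]\exp\bigl(F(xy)-F(y)\bigr)$; the inverse binomials enter through the Beta integral $\binom{N-1}{\la_1-1}^{-1}=N\int_0^1 z^{\la_1-1}(1-z)^{N-\la_1}\,dz$, which yields the integral representation \eqref{8}, and \eqref{4*} follows by expanding the rational kernel. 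In particular the difference $p_{\ell}(A^c)-p_{\ell-1}(A^c)$ arises as $[v^{\ell}](1-v)e^{F_{A^c}(v)}$, a generating-function identity, not a combinatorial telescoping of a cycle-count statistic. Your route for part (b) --- substitute \eqref{2**} into \eqref{4*} and cancel signs --- is also not what the paper does and is unsupported: the authors explicitly avoid that alternating sum, instead carrying $[y^N]$ inside the integral \eqref{8} and exploiting the closed forms of $e^{F(xy)-F(y)}$ for $\mathcal E$ and $\mathcal O$ before integrating over $z$; that reordering is where the positive sums in \eqref{5*} come from.

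Your asymptotic step is also quantitatively wrong. Since $\binom{N-2}{N/2-1}\sim\tfrac14\binom{N}{N/2}$, the $k=1$ term of the $q_N(\mathcal E)$ sum equals $\tfrac{2N}{(N+1)2^N}\binom{N-2}{N/2-1}\bigl(1+o(1)\bigr)\sim\tfrac12(\pi N/2)^{-1/2}$, half of what you assert; your displayed equivalence $\tfrac{2N}{(N+1)2^N}\binom{N-2}{N/2-1}\sim 2^{-N}\binom{N}{N/2}$ is off by a factor of $2$. Moreover the terms with $k\ge 2$ are \emph{not} lower order: writing $\tfrac1k\binom{2(k-1)}{k-1}=C_{k-1}$ (a Catalan number) and noting $2^{-N}\binom{2\ell}{\ell}\sim 4^{-k}(\pi N/2)^{-1/2}$ for each fixed $k$, every fixed $k$ contributes at order $N^{-1/2}$, and the limiting constant is $2\sum_{k\ge1}C_{k-1}4^{-k}=1$, obtained by evaluating the Catalan generating function at $1/4$. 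So the stated asymptotics emerges only after summing the entire series, and your ``remaining terms contribute lower-order corrections'' would produce the wrong constant. The same care is needed for the $q_N(\mathcal O)$ formulas, where the factors $\binom{N}{2\ell}^{-1}$ concentrate the mass at \emph{both} ends of the summation range, each end contributing at order $N^{-1/2}$, so a one-term analysis is again insufficient.
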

%We did not prove the part {\bf (b)\/} by combining the formulas \eqref{4*} and \eqref{2**}. Instead, we used a  different sequence of steps for the derivation. The equations \eqref{5*} are substantively more amenable to asymptotic analysis since the term signs do not alternate. 

The set $\mathcal E$ is a special case of $\mathcal D$, the full set of positive integers divisible by a fixed $d>1$. So, we count the permutations whose cycle lengths are all divisible by $d$. Here  
\begin{equation}\label{6*}
F_{\mathcal D}=\tfrac{1}{d} \log(1-x^d)^{-1},
\end{equation}
and consequently, for $N\equiv 0(\text{mod }d)$, 
\begin{equation*}
p_N(\mathcal D)=\tfrac{1}{(N/d)!}\prod_{r=0}^{N/d-1}(r+1/d).
\end{equation*}
Similarly to  the proof for $q_N(\mathcal E)$, we will show in the Appendix 1 that
\begin{equation}\label{7*}
\begin{aligned}
q_N(\mathcal D)&=\tfrac{N}{d(N+1)}\sum_{dk+d\ell=N,  \atop 0\leq k,\ell \leq N}\tfrac{1}{k!\ell!}\biggl(1-(-1)^{dk}\binom{N}{d\ell}^{-1}\biggr)\\
&\qquad\times \prod_{r=1}^{k-1}(r-1/d)\prod_{r'=0}^{\ell-1}(r'+1/d),
\end{aligned}
\end{equation}
and for $d=2$ the equation \eqref{7*} yields the top  formula in \eqref{5*}. 

 Note. For $A=\{2,3,\dots\}$, a permutation $p$ of $[N]$ is type-A if and only if $p$ has no cycles of length $1$, that is, when  $p$ is a derangement. In this case, the formula \eqref{new?} becomes
\begin{equation}\label{star}
p_k(A)=\sum_{\mu=0}^k \tfrac{(-1)^{\mu}}{\mu!},\quad p_{\ell}(A^c)=\tfrac{1}{\ell!}.
\end{equation}
With some work (see Appendix 2), the equations  \eqref{4*} and \eqref{star} imply
\begin{equation}\label{new10*}
(N-1)! \,q_N(A)=(-1)^{N-1}\sum_{j=0}^{N-2}\tfrac{(-1)^j}{j!}\sum_{j\le k<N}(-1)^k k! .
%\tfrac{1}{N+1}\sum_{k+\ell=N,  \atop 0\leq k,\ell \leq N}(-1)^{N-k-1}k! (\ell-1)\sum_{\mu=0}^k\tfrac{(-1)^{\mu}}{\mu!}.
\end{equation}
The reason we are interested in $(N-1)! \,q_N(A)$ is that it equals the number of $N$-long cycles $p$ such that
the product $(12\dots N)p$ is a derangement. (Comfortingly, the RHS of \eqref{new10*} is indeed an integer for all $N$, since $k!/j!$ is an integer for $k\ge j$.) And it is well-known that this property holds if and only there is no $i\in [N]$ such that $p(i)=i-1(\text{mod }N)$, see for instance Stanley \cite{Sta1966}, or Charalambides \cite{Cha}. It follows from the inclusion-exclusion principle that $C_N$, the total number of such maximal cycles $p$ is given by
\begin{equation}\label{new11*}
C_N=N!\sum_{\mu=0}^{N-1}\tfrac{(-1)^{\mu}}{(N-\mu)\mu!}+(-1)^{N}.
\end{equation}
Consequently, $(N-1)! \,q_N(A)=C_N$, which is not obvious at all. Using Maple, we checked that this identity holds for all $N\le 10$. At this moment it is unclear to us how
 to prove this identity formally, that is, without using the fact that both sides count the same objects.

\section{Proof of Theorem \ref{thm1*}.} We start with the following general formula, Pittel \cite{Pit1} and Zagier \cite{Zag}. Let $S_N$ be the set of all permutations on $[N]$, and let $C_1,\dots, C_k$ be some conjugacy classes from $S_N$. Let $\sigma_1,\dots, \sigma_k$ be the random {\it independent\/} elements of $S_N$, where $\sigma_j$ is distributed {\it uniformly\/} on $C_j$. For
$\sigma:=\sigma_1\cdots\sigma_k$, we have
\begin{equation}\label{1}
\Bbb P(\sigma=s)=P_{\sigma}(s):= \tfrac{1}{N!}\sum_{\la}(f^{\la})^{-k+1}\chi^{\la}(s)\prod_{j=1}^k\chi^{\la}(C_j).
\end{equation}
Here the sum is taken over all integer partitions (Young diagrams) of $N$, and $f^{\la}$ is the dimension of the irreducible representation of $S_N$ associated with $\la$, given by the hook formula
\begin{equation}\label{2}
f(\la)=\tfrac{N!}{\prod_{(i,j)\in \la}h_{i,j}}.
\end{equation}
Here $h_{i,j}$ is the cardinality of the hook for a cell $(i,j)$, that is,  $1$ plus the number of cells in $\la$ to the east and  to the south from $(i,j)$. Furthermore, $\chi^{\la}(s)$ and $\chi^{\la}(C_j)$ are the values of the associated character $\chi^{\la}$ at $s$ and at the elements of the conjugacy classes $C_j$.

Let $\mathcal C_{\bold p}$ be a generic conjugacy class consisting of all permutations of $[N]$, with cycle lengths $p_1\ge p_2\ge\cdots$,  with $\sum_jp_j=N$.  By the Murnaghan-Nakayama rule, (see Macdonald \cite{Mac},  Sagan \cite{Sag}. Stanley \cite{stanley}), the common value of  $\chi^{\la}(s)$ for $s\in \mathcal C_p$ is given by
\begin{equation}\label{3}
\chi^{\la}(\mathcal C_{\bold p})=\sum_T(-1)^{\text{ht}(T)}.
\end{equation}

 Here the sum is over all {\it rim hook} diagrams $T$ of shape $\la$ and type $\bold p$, that is, over all ways to empty the diagram $\la$ by successive deletions of {\it border strips}, (strips  consisting of only current border cells), one strip at a time, of {\it weakly increasing\/} lengths $p_j$. Further, $\text{ht}(T)$ is the sum of heights of the individual border strips (number of their rows minus $1$) in $T$. We will see that, contrary to its forbidding appearance, the formula \eqref{3} {\it alone} leads to a surprisingly simple expression for the values $\chi^{\la}(\mathcal C)$, if $\mathcal C$ is the conjugacy classs of one-cycle permutations.
%Here the sum is over all {\it rim hook\/} diagrams $T$ of shape $\la$ and type $\bold p$. This means the following. The $i$th row of the Ferrers diagram $T$ has $\lambda_i$ boxes. The boxes of $T$ are filled with positive integers so that every row and column is weakly increasing, and the integer $i$ appears $p_i$ times. Furthermore, for each $i$, the entries $i$ of $T$ form a {\em border strip},   that is, 
%a connected skew shape (a Ferrers diagram with a smaller Ferrers diagram removed from it) that does not contain a $2\times 2$ square. 
% Finally, $\text{ht}(T)$ is the sum of heights of the individual border strips (number of rows minus $1$) in the rim hook diagram $T$. Note that rim hook diagrams are also called {\em border strip 
%tableaux}.   We will see that, contrary to its forbidding appearance, the formula \eqref{3} {\it alone\/} leads to a surprisingly simple expression for the {\it non-zero\/} values of $\chi^{\la}(\mathcal C)$ if $\mathcal C$ is the conjugacy class of one-cycle permutations.}

We focus on $k=2$, and $C_1=C_2=\mathcal C_N$, the set of all maximal, $N$-long, cycles.  Here the composition $\bold p$ consists of a single component $N$, and the range of $T$ is empty unless the diagram $\la$ is a single hook $\la^*$, with one horizontal row of length $\la_1$ and one column of height $\la^1$, $\la_1+\la^{1}=N+1$, in which case $\chi^{\la*}(\mathcal C_N)=(-1)^{\la^1-1}$. And by the hook formula \eqref{2}, we obtain
%We focus on $k=2$, and $C_1=C_2=\mathcal C_N$, the set of all maximal, $N$-long, cycles.  So $p_1=N$, and $r=1$. This means that no rim hook diagram $T$ can exist (since all boxes will contain the entry 1, and so there will be a $2\times 2$ square within the tableaux of these entries)   unless the Ferrers diagram of $\lambda$  is a single hook $\la^*$, with one horizontal row of length $\la_1$ and one column of height $\la^1$, where $\la_1+\la^{1}=N+1$. In that case, the Murnagham-Nakayama rule yields $\chi^{\la*}(\mathcal C_N)=(-1)^{\la^1-1}$. And by the hooklength formula \eqref{2} or a direct combinatorial argument, we obtain
 
\begin{equation}\label{4}
f(\la^*)=\tfrac{N!}{N\prod\limits_{i=1}^{\la^1-1}\!\!i\,\,\,\times\prod\limits_{j=1}^{\la_1-1}\! j}=\binom{N-1}{\la_1-1}.
\end{equation}

Furthermore, by \eqref{3}, given one-hook diagram $\la^*$, the value of $\chi^{\la^*}(s)$ depends on a generic permutation $s$ only through $\boldsymbol\nu=\boldsymbol\nu (s)$, where $\nu_r=\nu_r(s)$ is the total number of $r$-long cycles in $s$. We need the following tool. 
%In \cite{Pit1} It was proved that
\begin{lemma}\label{lem*}  Let $\mathcal C_{\bold p}$ be a generic conjugacy class consisting of all permutations of $[N]$, with cycle lengths $p_1\ge p_2\ge\cdots$,  with $\sum_jp_j=N$.
Let $\nu_j$ be the number of $p_i$'s equal to $j$. Then
\begin{equation}\label{5}
\chi^{\la^*}(\mathcal C_{\bold p})=(-1)^{\la^1}[x^{\la_1}]\tfrac{x}{1-x}\prod_{r\ge 1}(x^r-1)^{\nu_r},
\end{equation}
where $\la_1$ and $\la^1$ are the arm length  and the leg  length of the hook $\la^*$.
\end{lemma}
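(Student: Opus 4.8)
The plan is to evaluate the character $\chi^{\lambda^*}(\mathcal C_{\bold p})$ directly from the Murnaghan-Nakayama rule \eqref{3}, specialized to the case where $\lambda^*$ is a single hook with arm length $\lambda_1$ and leg length $\lambda^1$ (so $\lambda_1+\lambda^1=N+1$). The key structural observation is that a rim-hook tableau of a hook shape has a very rigid form: each border strip we delete, having weakly increasing length, is itself a rim hook of a (smaller) hook shape, and at each stage the remaining diagram is again a hook. A border strip of length $r$ removed from a hook occupies some cells of the arm and some cells of the leg; if it takes $a$ cells from the arm and $b$ cells from the leg (with the corner possibly included), then $a+b=r$ and its height is $b$ (or $b-1$, depending on whether it includes the corner), contributing a sign $(-1)^{\text{ht}}$. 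The goal is to package all these sign contributions across all valid sequences of deletions into the single generating-function coefficient on the right-hand side of \eqref{5}.

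First I would set up the bookkeeping for a single border-strip removal. For a hook with a given arm and leg, removing a border strip of length $r$ that does not empty the diagram must peel off a connected rim piece; I would verify that such a strip either lies entirely in the arm (height $0$, sign $+1$), entirely in the leg (height $r-1$, sign $(-1)^{r-1}$), or bends around the corner taking $a\ge 1$ cells horizontally and $b\ge 1$ cells vertically with $a+b-1=r$ and height $b-1$. The factor $(x^r-1)$ in the product is meant to encode exactly this sign dichotomy per cycle of length $r$: expanding $\prod_r (x^r-1)^{\nu_r}$, a choice of $-1$ versus $x^r$ from each factor corresponds to whether the strip of length $r$ goes "up the leg" or "along the arm," and the power of $x$ tracks the total arm length consumed, which must equal $\lambda_1$. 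The prefactor $\tfrac{x}{1-x}$ together with the overall sign $(-1)^{\lambda^1}$ should account for the single strip that bends around the corner and for the parity of the total number of cells placed in the leg.

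The cleanest route is probably to argue by induction on the number of border strips (equivalently on the number of cycles), peeling off the largest cycle $p_1$ first as required by the weakly-increasing condition, and showing that the recursion on hooks matches the recursion satisfied by the coefficient extraction in \eqref{5}. Alternatively, and perhaps more transparently, I would give a direct bijective/generating-function argument: enumerate all rim-hook tableaux of the hook $\lambda^*$ of type $\bold p$ by recording, for each cycle length, whether its strip is horizontal or vertical, track the induced signs, and recognize the resulting signed sum as the stated coefficient. I expect the main obstacle to be the careful treatment of the unique corner-bending strip — precisely which deletion in the sequence passes through the corner cell, and how its height interacts with the $\tfrac{x}{1-x}$ factor and the global sign $(-1)^{\lambda^1}$. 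Getting the boundary cases right (e.g.\ when a single strip fills the entire arm or entire leg, or when the corner strip is the first or last removed) is where the sign accounting is most delicate, and I would check those against small examples such as $\lambda^*$ a pure row or pure column before trusting the general identity.
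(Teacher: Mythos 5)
Your plan is essentially the paper's own proof: apply Murnaghan--Nakayama to the hook $\lambda^*$, observe that every border strip must be an arm segment, a leg segment, or the unique strip through the corner cell (which is necessarily the last one removed, since it must exhaust the remaining diagram), encode the arm/leg choice for each length-$r$ strip by the factor $(x^r-1)^{\nu_r}$ with $x^r$ tracking arm cells consumed, and attribute the prefactor $\tfrac{x}{1-x}$ and the sign $(-1)^{\lambda^1}$ to the corner strip and the height parity. The one step you explicitly defer --- summing over the admissible splits of the corner strip of length $\ell$ into $\mu_1$ arm cells and $\mu^1$ leg cells, with one factor $(x^{\ell}-1)$ reserved for that strip, and showing via $\sum_{u=0}^{\min(\ell,\lambda_1)-1}\xi^u=\tfrac{\xi^{\min(\ell,\lambda_1)}-1}{\xi-1}$ that the result telescopes to $\tfrac{1}{\xi-1}\prod_r(\xi^r-1)^{\nu_r}$ because the overflow terms when $\ell>\lambda_1$ only affect coefficients of degree $\ge\lambda_1$ --- is precisely where the paper's computation does its work, so your sketch has the correct skeleton but stops just short of the identity's crux (and note that in the weakly-increasing deletion order the corner strip is the \emph{last}, i.e.\ longest, strip removed, not the first as your induction suggests).
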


\begin{proof} The argument is an  edited proof of this identity in \cite{Pit1}. In a generic rim hook diagram $T$,  the last rim hook  is a hook diagram with a row and a column of sizes $\mu_1$ and $\mu^1$, with $\mu_1\le \la_1$  and $\mu^1\le \la^1$. Define $\ell$ by $\ell+1=\mu_1+\mu^1$. All the preceding rim hooks in $T$ are horizontal and vertical segments of the 
arm and of the leg of $\la^*$. Let us evaluate the total number  of the rim hooks diagrams with $h_r$, the number of the horizontal hooks of size $r$; so $h_r\in [0,\nu_r]$ for $r\neq \ell$ and $h_{\ell}\in [0,\nu_{\ell}-1]$. The admissible $\bold h:=\{h_r\}$ must meet the additional constraint 
\begin{equation}\label{5.1}
\sum_r h_r r+\mu_1=\la_1. 
\end{equation}
If $v_{r}$ is similarly defined as the total number of vertical hooks of size $r$, then the preceding constraint implies that 
$\sum_r v_r r+\mu^1=\la^1$. The total number
of the rim hook diagrams $T$, with parameters $\mu_1$,  $\mu^1$, and $ \bold h$, is
\[
\binom{\nu_{\ell}-1}{h_{\ell}}\prod_{r\neq \ell}\binom{\nu_r}{h_r}=\prod_r\binom{\nu_r-\delta_{\ell,r}}{h_r},\quad \delta_{\ell,r}=\delta_{\ell,r}:=\Bbb I(r=\ell).
\]
We do not have to multiply the above product by the multinomial coefficient $\binom{\nu-1}{\nu_1,\dots,\nu_{\ell-1}, \nu_{\ell}-1,\nu_{\ell+1},\dots}$ since the rim hooks are deleted in the fixed, increasing order of their lengths.  Further, the $\mu^1$-long leg of the last (longest) hook contributes $\mu^1-1$ to the height $T$, while the total contribution
of the vertical rim hooks is $(\la^1-\mu^1)$, the sum of their sizes, minus $\sum_r(\nu_r-\delta_{\ell,r}-h_r)$, their total number. So
\begin{align*}
ht(T)&=(\mu^1-1)+(\la^1-\mu^1)-\sum_r(\nu_r-\delta_{\ell,r}-h_r)\\
&\equiv \la^1-1+\sum_r(\nu_r-\delta_{\ell,r}-h_r)\, \, (\text{mod }2).
\end{align*}
Therefore the total contribution to $\chi^{\la^*}(s)$ from the rim hook diagrams with the last rim hook $\mu^*$ is
\begin{align*}
&(-1)^{\la^1-1}\sum_{\bold h\text{ meets }\eqref{5.1}}\prod_r (-1)^{\nu_r-\delta_{\ell,r}-h_r}\binom{\nu_r-\delta_{\ell,r}}{h_r}\\
&=\,\, (-1)^{\la^1-1}[\xi^{\la_1-\mu_1}]\prod_r\sum_{h_r}(-1)^{\nu_r-\delta_{\ell,r}-h_r}(\xi^r)^{h_r}\binom{\nu_r-\delta_{\ell,r}}{h_r}\\
&=\,\, (-1)^{\la^1-1}[\xi^{\la_1-\mu_1}]\prod_r (\xi^r-1)^{\nu_r-\delta_{\ell,r}}.
\end{align*}
Since $\la_1-\mu_1\in [\la_1 -\min(\ell,\la_1), \la_1-1]$, we have
\begin{align*}
\chi^{\la^*}(s)&=(-1)^{\la^1-1}\sum_{t=\la_1-\min(\ell,\la_1)}^{\la_1-1}[\xi^t]\prod_r(\xi^r-1)^{\nu_r-\delta_{\ell,r}}\\
&=(-1)^{\la^1-1}[\xi^{\la_1-1}]\sum_{\ell}\biggl(\,\sum_{u=0}^{\min(\ell,\la_1)-1}\xi^u\biggr)\prod_r(\xi^r-1)^{\nu_r-\delta_{\ell,r}}\\
&=(-1)^{\la^1-1}[\xi^{\la_1-1}]\,\sum_{\ell}\tfrac{\xi^{\min(\ell,\la_1)}-1}{(\xi-1)(\xi^{\ell}-1)}\prod_r(\xi^r-1)^{\nu_r}.\\
\end{align*}
Here
\[
\tfrac{\xi^{\min(\ell,\la_1)}-1}{(\xi-1)(\xi^{\ell}-1)}=\left\{\begin{aligned}
&\tfrac{1}{\xi-1},&&\ell\le \la_1,\\
&\tfrac{\xi^{\la_1}-1}{(\xi-1)(\xi^{\ell}-1)},&&\ell>\la_1.\end{aligned}\right.
\]
Since
\[
\tfrac{\xi^{\la_1}-1}{(\xi-1)(\xi^{\ell}-1)}=\tfrac{1}{\xi-1}+\sum_{j\ge \la_1}c_j \xi^j,
\]
we see that
\[
\chi^{\la^*}(s)=(-1)^{\la^1-1}[\xi^{\la_1-1}] \tfrac{1}{\xi-1}\prod_r(\xi^r-1)^{\nu_r}.
\]

\end{proof}
Substituting formulas \eqref{4} and \eqref{5} into \eqref{1} and setting $\nu=\sum_r\nu_r$, we obtain the equality
\begin{equation}\label{6}
\Bbb P(\sigma=s)=\tfrac{1}{N!}\sum_{\la_1\in [N]}\binom{N-1}{\la_1-1}^{-1}\!\!(-1)^{\la^1} [x^{\la_1}]\tfrac{x}{1-x}\prod_r (x^r-1)^{\nu_r}.
\end{equation}

Furthermore, using the terminology in Sachkov \cite{Sach}, for a given  $A\subseteq \{1,2,\dots\}$, we call $s$ an $A$-permutation if $\nu_r=0$ for all $r\notin A$. Let us use \eqref{6} to obtain a hopefully tractable formula for $q_N(A)$,  the probability that the product $s$ of two $N$-cycles is an $A$-permutation.
To this end, we need to sum the right-hand side of \eqref{6} over all $\boldsymbol \nu$ with $\nu_r=0$ for $r\notin A$. 
To begin, notice that the RHS of \eqref{6} depends on the cycle-type $\{\nu_r(s)\}$ of the permutation $s$ only via the rightmost
product $\prod_r(x^r-1)^{\nu_r}$. Observing that 
the total number of permutations with given cycle counts $\boldsymbol\nu=\{\nu_r\}$ is
\[
\tfrac{N!}{\prod\limits_r (r!)^{\nu_r}\nu_r!}\cdot\prod_{r'}((r'-1)!)^{\nu_{r'}}=\tfrac{N!}{\prod\limits_r r^{\nu_r}\nu_r!},
\]
we have
\begin{multline*}
\sum_{\boldsymbol\nu:\,\nu_r=0,\,\forall\,r\notin A\atop \sum_r r\nu_r=N}\tfrac{N!}{\prod\limits_r r^{\nu_r}
\nu_r!}\prod_{r} (x^{r}-1)^{\nu_{r}}=N!\sum_{\boldsymbol\nu:\,\nu_r=0,\,\forall\,r\notin A\atop \sum_r r\nu_r=N}
\prod_r\bigl(\tfrac{x^r-1}{r}\bigr)^{\nu_r}\tfrac{1}{\nu_r!}\\
=N!\, [y^N]\prod_{r\in A}\sum_{m\ge 0}\bigl(\tfrac{x^r-1}{r}\bigr)^m \cdot \tfrac{y^{mr}}{m!}
=N!\, [y^N]\prod_{r\in A}\exp\Bigl(\tfrac{y^r(x^r-1)}{r}\Bigr)\\
=N!\, [y^N]\exp\bigl(F(xy)-F(y)\bigr),\quad F(\eta):=\sum_{r\in A}\tfrac{\eta^r}{r}.
\end{multline*}

Consequently,
 \begin{multline}\label{7}
q_N(A)=\sum_{s\text{ is type}-A} \Bbb P(\sigma=s)\\
=\tfrac{1}{N!}\sum_{\la_1\in [N]}\binom{N-1}{\la_1-1}^{-1}(-1)^{\la^1}[x^{\la_1}]\tfrac{x}{1-x}
\sum_{\boldsymbol\nu:\,\nu_r=0,\,\forall\,r\notin A\atop \sum_r r\nu_r=N}\tfrac{N!}{\prod\limits_r r^{\nu_r}
\nu_r!}\prod_{r} (x^{r}-1)^{\nu_{r}}\\
=[y^N]\ e^{-F(y)}\sum_{\la_1\in [N]}\binom{N-1}{\la_1-1}^{-1}\!\!(-1)^{\la^1} [x^{\la_1}]\tfrac{x}{1-x} e^{F(xy)}.\\
\end{multline}
Here
\begin{align*}
\binom{N-1}{\la_1-1}^{-1}&=\tfrac{(\la_1-1)!(N-\la_1)!}{(N-1)!}=N\int_0^1z^{\la_1-1}(1-z)^{N-\la_1}\,dz,\\
(-1)^{\la^1} [x^{\la_1}]\tfrac{x}{1-x} e^{F(xy)}&=(-1)^{N+1-\la_1} [x^{\la_1-1}]\tfrac{e^{F(xy)}}{1-x}\\
&=(-1)^N (-1)^{\la_1-1}[x^{\la_1-1}]\tfrac{e^{F(xy)}}{1-x}=(-1)^N [x^{\la_1-1}]\tfrac{\exp(F(-xy))}{1+x}.
\end{align*}
Therefore the sum in the last row of  \eqref{7} equals
\begin{multline*}
(-1)^N N \sum_{\la_1\in [N]} \biggl(\int_0^1z^{\la_1-1}(1-z)^{N-\la_1}\,dz\biggr)  [x^{\la_1-1}]\tfrac{\exp(F(-xy))}{1+x}\\
=(-1)^N N \int_0^1\sum_{\la_1\in [N]}z^{\la_1-1} [x^{\la_1-1}]\tfrac{\exp(F(-xy))}{1+x} (1-z)^{(N-1)-(\la_1-1)}\,dz\\
=(-1)^N N \int_0^1\sum_{\la_1\in [N]}[x^{\la_1-1}]\tfrac{\exp(F(-xyz))}{1+xz}\times \bigl[x^{(N-1)-(\la_1-1)}\bigr]\tfrac{1}{1-x(1-z)}\,dz\\
=(-1)^N N \int_0^1[x^{N-1}]\tfrac{\exp(F(-xyz))}{(1+xz)(1-x(1-z))}\,dz\\
=-N \int_0^1[x^{N-1}]\tfrac{\exp(F(xyz))}{(1-xz)(1+x(1-z))}\,dz.
\end{multline*}
In combination with \eqref{7} we have proved
%\begin{theorem}\label{thm1}
\begin{equation}\label{8}
q_N(A)=-N[y^N]\, e^{-F(y)}\int_0^1[x^{N-1}]\,\tfrac{e^{F_A(xyz)}}{(1-xz)(1+x(1-z))}\,dz.
\end{equation}

{\bf (a)\/} Let us compute the integral on the right-hand side of \eqref{8} for a general $A$. Since
\begin{multline}\label{9}
\tfrac{1}{(1-xz)(1+x(1-z))}=\tfrac{1}{x}\tfrac{1}{1-xz}-\tfrac{1}{x}\cdot\tfrac{1}{1+x(1-z)}\\
=\tfrac{1}{x}\sum_{j\ge 0}(xz)^j-\tfrac{1}{x}\sum_{j\ge 0} (-1)^j x^j(1-z)^j,
\end{multline}
we have
\begin{multline*}
[x^{N-1}]\,\tfrac{\exp(F_A(xyz)}{(1-xz)(1+x(1-z))}=[x^N] e^{F_A(xyz)}\biggl(\sum_{j\ge 0}(xz)^j-\sum_{j\ge 0}(-1)^jx^j(1-z)^j\biggr)\\
=\sum_{k+j=N} (yz)^k [u^k] e^{F_A(u)} \bigl(z^j-(-1)^j(1-z)^j\bigr).\\
\end{multline*}
So, the integral on the right-hand side of \eqref{8}  equals
\begin{multline*}
\sum_{k+j=N}y^k [u^k]e^{F_A(u)}\int_0^1z^N\,dz-\sum_{k+j=N}(-1)^jy^k [u^k]e^{F_A(u)}\int_0^1z^k(1-z)^j\,dz\\
=\tfrac{1}{N+1}\sum_{k+j=N}y^k [u^k]e^{F_A(u)}-\tfrac{1}{N+1}\sum_{k+j=N}(-1)^j\binom{N}{j}^{-1}y^k [u^k]e^{F_A(u)}.
\end{multline*}
Then, since  $e^{-F_A(y)}=\sum_{\ell} y^{\ell} [v^{\ell}] e^{-F_A(v)}$, 
\begin{multline*}
[y^N]\, e^{-F_A(y)}\int_0^1[x^{N-1}]\,\tfrac{e^{F_A(xyz)}}{(1-xz)(1+x(1-z))}\,dz
=\tfrac{1}{N+1}\sum_{k+\ell=N} [v^{\ell}]e^{-F_A(v)} [u^k] e^{F_A(u)}\\-
\tfrac{1}{N+1}\sum_{k+j=N  \atop k+\ell=N}(-1)^j\binom{N}{j}^{-1} [v^{\ell}]e^{-F_A(v)}[u^k] e^{F_A(u)}\\
=\tfrac{1}{N+1}\sum_{k+\ell=N} [v^{\ell}]e^{-F_A(v)} [u^k] e^{F_A(u)}\\-
\tfrac{1}{N+1}\sum_{k+\ell=N}(-1)^j\binom{N}{\ell}^{-1} [v^{\ell}]e^{-F_A(v)}[u^k] e^{F_A(u)}.
\end{multline*}
The first sum equals $[x^N] e^{-F_A(x)+F_A(x)}=0$.  In the second sum, $[u^k]e^{F_A(u)}=p_k(A)$ for 
$k\ge 1$, and $[u^0]e^{F_A(u)}=1=:p_0(A)$.  Furthermore, recalling that  $A^c$ denotes the complement of $A$ in $\{1,2,\dots\}$, we get that

\[
[v^{\ell}] e^{-F_A(v)}=[v^{\ell}]\exp\biggl(-\log\tfrac{1}{1-v}+\sum_{r\in A^c}\tfrac{v^r}{r}\biggr).
\]
So %$[v^0]e^{-F_A(v)}=1$, and
 recalling that $p_0(A^c):=1$ and $p_{-1}(A^c):=0$, we have 
\begin{equation*} [v^{\ell}]e^{-F_A(v)}
=[v^{\ell}](1-v)e^{F_{A^c}(v)}=p_{\ell}(A^c)-p_{\ell-1}(A^c),\quad \ell\ge 0.
\end{equation*}
Collecting the pieces, we obtain
\begin{equation}\label{!}
q_N(A)=\tfrac{N}{N+1}\sum_{k+\ell=N}(-1)^{\ell}\binom{N}{\ell}^{-1} 
p_k(A)\bigl(p_{\ell}(A^c)-p_{\ell-1}(A^c)\bigr).
\end{equation}
Thus to evaluate the probability that the product of two cycles is an $A$-permutation, it suffices to know the probabilities that the uniformly distributed permutations of various length are $A$-permutations or 
$A^c$-permutations.
Explicitly,
%As an illustration of potential of this identity, consider $A=\{m,m+1,m+2,\dots\}$, i. e. a type-$A$ permutation has no cycles of length $<m$. (For $m=2$ we get all fixed-point free permutations.) 
%Then 
\begin{align*}
p_k(A)&:=[x^k] \exp(F_A(x)]=[x^k]\exp\biggl(\sum_{r>1}\tfrac{x^r}{r}-\sum_{r<m}\tfrac{x^r}{r}\biggr)\\
&=[x^k]\tfrac{1}{1-x}\exp\biggl(-\sum_{r<m}\tfrac{x^r}{r}\biggr)=\sum_{j\ge 0}[x^{k-j}]\exp\biggl(-\sum_{r<m}\tfrac{x^r}{r}\biggr)\\
&=\sum_{\mu\ge 0}\tfrac{(-1)^{\mu}}{\mu!}\sum_{j\ge 0}[x^{k-j}]\biggl(\sum_{r<m}\tfrac{x^r}{r}\biggr)^{\mu}\\
&=\sum_{\mu\ge 0}(-1)^{\mu}\sum_{j\le k}\sum_{\mu_1,\dots,\mu_{m-1}\atop \sum_r\mu_r=\mu,\,
\sum_r r\mu_r=k-j}\prod_{r=1}^{m-1}\tfrac{1}{r^{\mu_r}\mu_r!}\\
&=\sum_{\mu_r\ge 0,\,r\in A^c\atop
\sum_r r\mu_r\le k}\prod_{r\in A^c}\tfrac{(-1)^{\mu_r}}{r^{\mu_r}\mu_r!},
\end{align*}
and 

\begin{eqnarray*}%\label{??}
p_{\ell}(A^c) &= & [x^{\ell}]\exp\biggl(\sum_{r\in A^c}\tfrac{x^r}{r}\biggr)=\sum_j [x^{\ell}]\tfrac{\biggl(\sum_{r\in A^c}\tfrac{x^r}{r}\biggr)^j}{j!}\\
& = & \sum_{\sum_{r\in A^c} r \mu_r=\ell}\,\prod_{r\in A^c}\tfrac{1}{r^{\mu_r}\mu_r!}.
\end{eqnarray*}

{\bf (b)\/} For $A=\mathcal E$ and $A=\mathcal O$, we get explicit formulas for $q_N(A)$
by combining \eqref{!} with \eqref{2**} for those $A$'s. However it turns out possible to obtain considerably simpler, sum-type, formulas 
for $q_N(\mathcal E)$ and $q_N(\mathcal O)$ with only positive summands. It is done  by properly changing the order of steps in the derivation above.

Start with $q_N(\mathcal E)$. In this case, we carry the operation
$[y^N]$ inside the integration in \eqref{8}:
\begin{equation}\label{1e}
\begin{aligned}
q_N(\mathcal E)&=-N\int_0^1[x^{N-1}y^N]\tfrac{(1-y^2)^{1/2}}{(1-(xyz)^2)^{1/2}}\cdot\tfrac{1}{(1-xz)(1+x(1-z))}\,dz\\
&=-N\int_0^1[x^{N-1}]H_N(xz)\cdot \tfrac{1}{(1-xz)(1+x(1-z))}\,dz,\\
\end{aligned}
\end{equation}
where 
%The argument mimicks the proof the formula for $q_N(\mathcal E)$, i.e. the case $d=2$. This time $N$ is divisible by $d$.
%The equation \eqref{1e} holds for $q_N(\mathcal D)$, where
\begin{multline}\label{1.5e}
H_N(xz):=[y^N]\tfrac{(1-y^2)^{1/2}}{(1-(xyz)^2)^{1/2}}\\
=[y^N]\biggl(1+\sum_{k\ge 1}\binom{1/2}{k}(-y^2)^k\biggr)\cdot\biggl(\sum_{\ell\ge 0}\binom{-1/2}{\ell}(-(xyz)^2)^{\ell}\biggr)\\
=[y^N]\biggr(1-\tfrac{1}{2}\sum_{k\ge 1}\tfrac{y^{2k}}{k!}\prod_{r=1}^{k-1}(r-1/2)\biggr)\cdot\biggl(\sum_{\ell\ge 0}\tfrac{(xyz)^{2\ell}}{\ell!}\prod_{r'=0}^{\ell-1}(r'+1/2)\biggr)\\
=\tfrac{(xz)^N}{(N/2)!}\prod_{r'=0}^{N/2-1}(r'+1/2)-\tfrac{1}{2}\sum_{k\ge 1 \atop 2k+2\ell=N}\tfrac{(xz)^{2\ell}}{k!\ell!}\prod_{r=1}^{k-1}(r-1/2)\prod_{r'=0}^{\ell-1}(r'+1/2).
\end{multline}
So, combining \eqref{1.5e} and \eqref{9}, we evaluate
\begin{multline*}
[x^N] H_N(xz)\sum_{j\ge 0} x^j\bigl[z^j-(-1)^j(1-z)^j\bigr]\\
%=\tfrac{z^N}{(N/d)!}\prod_{r'=0}^{N/d-1}(r'+1/d)\\
=-\tfrac{1}{2}\sum_{k\ge 1, 2k+2\ell=N,\atop j+2\ell=N}\tfrac{z^{2\ell}}{k!\ell!}\bigl[z^j-(-1)^j(1-z)^j\bigr]
\prod_{r=1}^{k-1}(r-1/d)\prod_{r'=0}^{\ell-1}(r'+1/d)\\
%=\tfrac{z^N}{(N/d)!}\prod_{r'=0}^{N/d-1}(r'+1/d)
=-\tfrac{1}{2}\sum_{k\ge 1,\, 2k+2\ell=N}\tfrac{1}{k!\ell!}\,\bigl[z^N-(-1)^{2k}\,(1-z)^{2k}z^{2\ell}\bigr]
\prod_{r=1}^{k-1}(r-1/2)\prod_{r'=0}^{\ell-1}(r'+1/2).
\\
\end{multline*}
Integrating over $z\in [0,1]$, using 
\[
\int_0^{1}(1-z)^{2k}z^{2\ell}\,dz=\tfrac{1}{N+1}\binom{N}{2\ell}^{-1},
\]
 and multiplying by $-N$, we obtain
\begin{align*}
q_N(\mathcal E)&=\tfrac{N}{2(N+1)}\sum_{k\ge 1,\, 2k+2\ell=N}\tfrac{1}{k!\ell!}\biggl(1-(-1)^{2k}\binom{N}{2k}^{-1}\biggr)\\
&\qquad\times \prod_{r=1}^{k-1}(r-1/2)\prod_{r'=0}^{\ell-1}(r'+1/2).
\\
%-\tfrac{N}{(N+1)(N/d)!}\prod_{r'=0}^{N/d-1}(r'+1/d).
\end{align*}
Since
\[
\tfrac{1}{k!} \prod_{r=1}^{k-1}(r-1/2)=\tfrac{1}{2^{2(k-1)}k}\binom{2(k-1)}{k-1},\quad 
\tfrac{1}{\ell!}\prod_{r'=0}^{\ell-1}(r'+1/2)=\tfrac{1}{2^{2\ell}}\binom{2\ell}{\ell},
\]
we conclude that
\[
q_N(\mathcal E)=\tfrac{2N}{(N+1) 2^N}\sum_{k+\ell=N/2}\tfrac{1}{k}\binom{2(k-1)}{k-1}\binom{2\ell}{\ell}\biggl(1-\binom{N}{2k}^{-1}\biggr)\!,\quad N\text{ even}.
\]

Consider $A=\mathcal O$. This time,
\begin{align*}
\exp\bigl(F_{\mathcal O}(xyz)-F_{\mathcal O}(y)\bigr)&=\sqrt{\tfrac{1+xyz}{1-xyz}}\cdot \sqrt{\tfrac{1-y}{1+y}}\\
&=\tfrac{1+xyz}{\sqrt{1-(xyz)^2}}\cdot\tfrac{1-y}{\sqrt{1-y^2}},
\end{align*}
and the equation \eqref{8} yields
\begin{align*}
q_N(\mathcal O)&=-[x^{N-1}y^N]N\int_0^1\tfrac{1+xyz}{\sqrt{1-(xyz)^2}}\cdot\tfrac{1-y}{\sqrt{1-y^2}}\cdot\tfrac{1}{(1-xz)(1+x(1-z))}\,dz\\
&=-[x^{N-1}]N\int_0^1H_N(xz)\cdot \tfrac{1}{(1-xz)(1+x(1-z))}\,dz,\\
\end{align*}
where 
\begin{multline}\label{9.5}
H_N(xz):=[y^N]\tfrac{1+xyz}{\sqrt{1-(xyz)^2}}\cdot\tfrac{1-y}{\sqrt{1-y^2}}\\
=[y^N] (1+xyz)(1-y)\sum_{\ell\ge 0}\binom{2\ell}{\ell}\bigl(\tfrac{(xyz)^2}{4}\bigr)^\ell\cdot\sum_{k\ge 0}\binom{2k}{k}\bigl(\tfrac{y^2}{4}\bigr)^k.
\end{multline}
%=[y^N]\biggl(1-2\sum_{k\ge 1}\bigl(\tfrac{y^2}{4}\bigr)^k\tfrac{1}{k}\binom{2(k-1)}{k-1}\biggr)\cdot
%\biggl(\sum_{\ell\ge 0}\binom{2\ell}{\ell}\bigl(\tfrac{(xyz)^2}{4}\bigr)^{\ell}\biggr)\\
%=\binom{N}{N/2}\bigl(\tfrac{xz}{2}\bigr)^N-\tfrac{2}{2^N}\sum_{k+\ell=N/2}\tfrac{1}{k}\binom{2(k-1)}{k-1}\binom{2\ell}{\ell}
%(xz)^{2\ell}.
%\binom{2N}{N}\bigl(\tfrac{xz}{2}\bigr)^N-\tfrac{1}{2^N}\sum_{\ell+2k=N}\binom{2\ell}{\ell}\binom{2k}{k}(xz)^{\ell}.
%\end{multline*}
{\bf (b1)\/} If  $N$ is even, then \eqref{9.5} yields
\begin{multline*}
H_N(xz)=[y^N](1-xy^2z)\sum_{\ell\ge 0}\binom{2\ell}{\ell}\bigl(\tfrac{(xyz)^2}{4}\bigr)^\ell\cdot\sum_{k\ge 0}\binom{2k}{k}\bigl(\tfrac{y^2}{4}\bigr)^k\\
=2^{-N}\sum_{2k+2\ell=N}\binom{2k}{k}\binom{2\ell}{\ell}(xz)^{2\ell}-2^{-N+2}\sum_{2k+2\ell=N-2}\binom{2k}{k}\binom{2\ell}{\ell}(xz)^{2\ell+1}.
\end{multline*}
Therefore
\begin{multline*}
[x^N] H_N(xz)\sum_{j\ge 0}(xz)^j=2^{-N}z^N\sum_{2k+2\ell=N}\binom{2k}{k}\binom{2\ell}{\ell}\\
-2^{-N+2}z^N\sum_{2k+2\ell=N-2}\binom{2k}{k}\binom{2\ell}{\ell},\\
\end{multline*}
and
\begin{multline*}
[x^N] H_N(xz)\sum_{j\ge 0}(-1)^jx^j(1-z)^j=2^{-N}\sum_{2k+2\ell=N}\binom{2k}{k}\binom{2\ell}{\ell}(1-z)^{2k}z^{2\ell}\\
+2^{-N+2}\sum_{2k+2\ell=N-2}\binom{2k}{k}\binom{2\ell}{\ell}(1-z)^{2k+1}z^{2\ell+1}.\\
\end{multline*}
Therefore
\begin{align*}
&q_N(\text{odd})=-N\int_0^1[x^{N-1}] \Bigl(H_N(xz)\cdot\tfrac{1}{(1-xz)(1+x(1-z))}\Bigr)\,dz\\
& =-N\int_0^1 \biggl[2^{-N}z^N\sum_{2k+2\ell=N}\binom{2k}{k}\binom{2\ell}{\ell}-2^{-N+2}z^N\sum_{2k+2\ell=N-2}\binom{2k}{k}\binom{2\ell}{\ell}\\
&\qquad\qquad\qquad\qquad\qquad-2^{-N}\sum_{2k+2\ell=N}\binom{2k}{k}\binom{2\ell}{\ell}(1-z)^{2k}z^{2\ell}\\
&\qquad\qquad\qquad\qquad\qquad\qquad-2^{-N+2}\sum_{2k+2\ell=N-2}\binom{2k}{k}\binom{2\ell}{\ell}(1-z)^{2k+1}z^{2\ell+1}\biggr]\,dz.\\
\end{align*}
Integrating over $z\in [0,1]$, we obtain 
\begin{align*}
&q_N(\mathcal O)=-\tfrac{N}{2^N(N+1)}\sum_{2k+2\ell=N}\binom{2k}{k}\binom{2\ell}{\ell}+\tfrac{N}{2^{N-2}(N+1)}
\sum_{2k+2\ell=N-2}\binom{2k}{k}\binom{2\ell}{\ell}\\
&\qquad\qquad\qquad\qquad\qquad+\tfrac{N}{2^N(N+1)}\sum_{2k+2\ell=N}\binom{2k}{k}\binom{2\ell}{\ell}
\binom{N}{2\ell}^{-1}\\
&\qquad\qquad\qquad\qquad\qquad\qquad+\tfrac{N}{2^{N-2}(N+1)}\sum_{2k+2\ell=N-2}\binom{2k}{k}\binom{2\ell}{\ell}
\binom{N}{2\ell+1}^{-1}.
\end{align*}
Here
\begin{eqnarray*}
\sum_{2k+2\ell=N}\binom{2k}{k}\binom{2\ell}{\ell}=[\eta^{N/2}]\biggl(\sum_{k\ge 0}\binom{2k}{k}\eta^k\biggr)^2\\
=[\eta^{N/2}]\tfrac{1}{1-4\eta}=4^{N/2}=2^N,
\end{eqnarray*}
and likewise
\[
\sum_{2k+2\ell=N-2}\binom{2k}{k}\binom{2\ell}{\ell}=2^{N-2}.
\]
Therefore the formula for $q_N(\text{odd})$ simplifies to 
\begin{multline}\label{11}
q_N(\mathcal O)=\tfrac{N}{2^N(N+1)}\sum_{2k+2\ell=N}\binom{2k}{k}\binom{2\ell}{\ell}
\binom{N}{2\ell}^{-1}\\
+\tfrac{N}{2^{N-2}(N+1)}\sum_{2k+2\ell=N-2}\binom{2k}{k}\binom{2\ell}{\ell}
\binom{N}{2\ell+1}^{-1}.
\end{multline}
%In particular, $P_2(\text{odd})=1$, $P_4(\text{odd})=\tfrac{5}{6}$. Asymptotically, $P_N(\text{odd})\sim \tfrac{1}{\sqrt{\pi N/2}}$ for $N\to\infty$: the overwhelming contribution comes from the $k=0$ in the first sum.

{\bf (b2)\/} If $N$ is odd, then \eqref{9.5} yields
\begin{multline*}
H_N(xz)=[y^N](y(xz-1))\sum_{\ell\ge 0}\binom{2\ell}{\ell}\bigl(\tfrac{(xyz)^2}{4}\bigr)^\ell\cdot\sum_{k\ge 0}\binom{2k}{k}\bigl(\tfrac{y^2}{4}\bigr)^k\\
=\tfrac{xz-1}{2^{N-1}}\sum_{2\ell+2k=N-1}\binom{2\ell}{\ell}\binom{2k}{k}(xz)^{2\ell}.
\end{multline*}
Consequently
\begin{multline*}
[x^N]H_N(xz)\sum_{j\ge 0}(xz)^j=\tfrac{1}{2^{N-1}}\biggl[z [x^{N-1}]\sum_{2\ell+2k=N-1}\binom{2\ell}{\ell}\binom{2k}{k}(xz)^{2\ell}\sum_{j\ge 0}(xz)^j\\
-[x^N]\sum_{2\ell+2k=N-1}\binom{2\ell}{\ell}\binom{2k}{k}(xz)^{2\ell}\sum_{j\ge 0}(xz)^j\biggr]\\
=\tfrac{1}{2^{N-1}}\biggl[z\sum_{2\ell+2k=N-1}\binom{2\ell}{\ell}\binom{2k}{k} z^{2\ell+2k}-z^N\sum_{2\ell+2k=N-1}\binom{2\ell}{\ell}\binom{2k}{k}\biggr]=0.
\end{multline*}
And similarly
\begin{multline*}
[x^N]H_N(xz)\sum_{j\ge 0}(-1)^jx^j(1-z)^j\\
=\tfrac{1}{2^{N-1}}\biggl[\sum_{2\ell+2k=N-1}\binom{2\ell}{\ell} \binom{2k}{k} z^{2\ell+1}(1-z)^{2k}\\
+\sum_{2\ell+2k=N-1}\binom{2\ell}{\ell} \binom{2k}{k} z^{2\ell}(1-z)^{2k+1}\biggr].
\end{multline*}
So, integrating $-[x^N]H_N(xz)\sum_{j\ge 0}(-1)^jx^j(1-z)^j$ over $z\in [0,1]$ and multiplying the result by $-N$, we obtain
\begin{equation}\label{12}
q_N(\mathcal O)\!=\!\tfrac{N}{(N+1)2^{N-1}}\!\!\sum_{2\ell+2k=N-1}\!\!\binom{2\ell}{\ell}\binom{2k}{k}\biggl(\!\binom{N}{2\ell}^{-1}\!\! +\binom{N}{2\ell+1}^{-1}\biggr).
\end{equation}
%In particular, $P_1(\text{odd})=1$, $P_3(\text{odd})=1$. Asymptotically, $P_N(\text{odd})\sim\tfrac{1}{\sqrt{\pi N/2}}$.
%Thus, asymptotically, there is no difference between "all even" and "all odd" cases.
The proof of the part {\bf (b)\/} is complete, and so is the proof of Theorem \ref{thm1*}.
\bi
{\bf Remarks.\/} {\bf(1)\/} The probabilities $q_N(A)$ do not change if instead of considering all pairs $(p,q)$ of cycles of length $N$, selected from 
$\mathcal C_N\times \mathcal C_N$ uniformly at random, we set
$p=(12\cdots N)$ and we select $q$ from $C_N$ uniformly at random.

\section{Block Transposition Sorting} \label{btsorting}  
 Our motivation for obtaining exact and asymptotic results regarding the product of two long cycles comes from
a biologically motivated sorting algorithms called {\em block transposition sorting}, Bafna and Pevzner \cite{bafna}. A step in this sorting algorithm consists of swapping {\em adjacent} blocks  of entries.  The lengths of the two blocks do not have to be the same.
That is, 
\[u=u_1u_2\cdots u_iu_{i+1}\cdots u_ju_{j+1} \cdots u_ku_{k+1}\cdots u_N\]
 is turned into the permutation
\[v=u_1u_2\cdots u_iu_{j+1}\cdots u_ku_{i+1} \cdots u_ju_{k+1}\cdots u_N.\]
If $u,v\in S_N$, and $t$ is the smallest  integer so that there exists a sequence of
$t$ block transpositions that turn $u$ into $v$, then we say that
$t=\mathrm{btd}(u,v)$ is the {\em block transposition distance} of  $u$ and $v$.

The block transposition
distance of two permutations is {\em left-invariant}, that is, if $u$, $v$ and $r$ are permutations of length $N$, then
$\mathrm{btd}(u,v)=\mathrm{btd}(ru,rv)$. Selecting $r=v^{-1}$, this means that $\mathrm{btd}(u,v)=\mathrm{btd}(v^{-1}u,\id)$, where $\mathrm{id}$ denotes the identity permutation.
This reduces our distance-measuring problem to a {\em sorting problem}, that is, we can focus, without loss of generality, on the block transposition distance of a given permutation
from the {\em identity permutation}.  Therefore,  we will write $\mathrm{btd}(u)$ for $\mathrm{btd}(u,\id)$. 

Several natural and difficult questions arise. Which permutation $u$ is the furthest away from $\id$? What is $\mathrm{td}(N)=\max_{u\in S_N}\mathrm{btd}(u)$?
 It is known, Eriksson et al., \cite{eriksson}, that $\mathrm{td}(N) \leq \lfloor 2N/3 \rfloor$. On the lower end \cite{eriksson},  
$\mathrm{btd}(N\cdots 21) = \lceil(N+1)/2 \rceil $, which led to the conjecture $\mathrm(td)(N)=\lceil(N+1)/2 \rceil $. However, this has been disproved, Elias and Hartman \cite{elias}, for $N=4k+1\geq 17$.
For such $N$, there is a permutation $u\in S_N$ so that $\mathrm{btd}(u)=2k+2=\lceil (N+1)/2 \rceil +1$. So there is a considerable gap between the best known lower and upper bound for 
$\mathrm{td}(N)$. 

As there are almost no known permutations $u$ satisfying the inequality $\mathrm{btd(u)}> (N+1)/2$, there is intrinsic interest in finding out how many permutations $v$ 
satisfy $\mathrm{btd}(v)=\lceil (N+1)/2 \rceil$. In this paper, we will find a lower bound for the number of such permutations. This lower bound will be significantly higher than  what could be
deduced before, based on the results in Cranston et al. \cite{cranston} and Christie \cite{christie}. 

Crucially, there is a bijection $C$ from the set of all permutations of length $N$ to the set of all 
permutations of length $N+1$ that are of the form $s \cdot z$, where $s=(12\cdots (N+1))$ and $\nu(z)=1$, that is, $z$  is a
cycle of length $N+1$. This bijection is defined \cite{christie} with the help of a graph called the {\em cycle graph}, but we will not need that definition in this paper.  
Let $\nu_{odd}(\pi)$ be the number of odd cycles of the
permutation $\pi$. The following lemma shows the importance of the bijection $C$ for us. 

\begin{lemma} \label{bafna}  \cite{bafna}
For all permutations $u$ of length $N$, the inequality
$\mathrm{btd}(u) \geq \frac{N+1-\nu_{odd}(C(u))}{2} $ holds. 
\end{lemma}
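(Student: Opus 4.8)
The plan is to push the sorting of $u$ through the bijection $C$ into $S_{N+1}$ and to follow a single monovariant, the number of odd cycles $\nu_{odd}$. I would rely on two inputs. First, one verifies directly from the definition of $C$ that $C(\id)$ is the identity of $S_{N+1}$; its cycles are then the $N+1$ fixed points, each of odd length $1$, so $\nu_{odd}(C(\id))=N+1$, the maximum value $\nu_{odd}$ can attain on $S_{N+1}$. Second, under $C$ a single block transposition applied to a permutation $w$ turns $C(w)=s\cdot z$ into $s\cdot z'$, where the $(N+1)$-cycle $z$ is replaced by its composition with a $3$-cycle; equivalently, $C(w)$ is replaced by $C(w)\cdot\gamma$ for some $3$-cycle $\gamma$. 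This is the algebraic translation of the cycle-graph description of block transpositions in \cite{bafna,christie}, and it is the only substantial external ingredient I would invoke.

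The technical heart is the claim that right multiplication by a $3$-cycle $\gamma=(a\,b\,c)$ changes $\nu_{odd}$ by at most $2$ in absolute value. Only the cycles of $\pi$ meeting $\{a,b,c\}$ are altered in $\pi\gamma$; let $U$ denote the union of their supports. A case analysis on how $a,b,c$ are distributed among the cycles of $\pi$ — all three in one cycle, in two cycles, or in three cycles — shows that the cycles covering $U$ number at most three both before and after the move; factoring $\gamma=(a\,c)(a\,b)$ into two transpositions and tracking the successive merges and splits of the cycle through $a$ makes each case transparent. Since the lengths of the cycles covering $U$ sum to $|U|$, the number of odd cycles among them is $\equiv|U|\pmod 2$ both before and after. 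Hence these two odd-cycle counts lie in $\{0,1,2,3\}$ and share a parity, so they belong to a common two-element set $\{0,2\}$ or $\{1,3\}$; their difference, which is exactly $\nu_{odd}(\pi\gamma)-\nu_{odd}(\pi)$, is therefore at most $2$ in absolute value.

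With these facts in place the lemma is a telescoping estimate. Choose a shortest sorting sequence $u=u_0\to u_1\to\cdots\to u_t=\id$ with $t=\mathrm{btd}(u)$, each arrow a block transposition; applying $C$ produces a chain $C(u_0),\dots,C(u_t)$ in $S_{N+1}$ whose consecutive terms differ by a $3$-cycle, so each step moves $\nu_{odd}$ by at most $2$. Comparing the two ends gives
\[
N+1-\nu_{odd}(C(u))=\nu_{odd}(C(\id))-\nu_{odd}(C(u))\le 2t,
\]
which rearranges to the asserted bound $\mathrm{btd}(u)\ge\tfrac{N+1-\nu_{odd}(C(u))}{2}$. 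The step I expect to be the genuine obstacle is the second input above: identifying one block transposition on $w$ with a single $3$-cycle multiplication on $C(w)$ rests on the cycle-graph construction of $C$ that the paper deliberately does not reproduce, so I would either quote it from \cite{bafna,christie} or reconstruct the correspondence from the definition of the cycle graph. Everything downstream of that correspondence — the bounded-change claim and the telescoping count — is elementary.
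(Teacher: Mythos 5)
The paper does not prove this lemma at all: it is quoted verbatim from Bafna and Pevzner \cite{bafna}, so there is no in-paper argument to compare against. Your reconstruction is the standard proof from that reference, translated from cycle-graph language into permutation products, and it is sound. The parity argument for the key step is correct: only cycles meeting $\{a,b,c\}$ are disturbed, their union $U$ is covered by at most three cycles both before and after multiplication by $(a\,b\,c)$ (every $\pi\gamma$-cycle meeting $U$ must pass through one of $a,b,c$), and since both odd-cycle counts over $U$ lie in $\{0,1,2,3\}$ and are congruent to $|U|$ modulo $2$, they differ by at most $2$; combined with $\nu_{odd}(C(\id))=N+1$ the telescoping bound follows. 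The one genuine external input --- that a single block transposition on $w$ acts on $C(w)$ by right multiplication by a $3$-cycle --- is exactly the content of the cycle-graph analysis in \cite{bafna} (see also Doignon--Labarre \cite{labarre}), and you correctly identify it as the ingredient that must be imported rather than rederived; since the lemma itself is a citation, leaning on that source for this step is entirely appropriate.
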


Clearly, the upper bound of Lemma \ref{bafna} is the strongest when $\nu_{odd}(C(u))=0$, that is, when 
all cycles of $C(u)$ are even. We are now in a position to prove a lower bound for the number of  such permutations $u$. 

 \begin{theorem}  \label{lowerbound} The number of permutations of length $n$ that take at least $\lceil (N+1)/2 \rceil$ block transpositions to sort is at least $N! \,q_{N+1}(\mathcal E)$, where $q_{\cdot}(\mathcal E)$ is defined by the first equation in $(\ref{5*})$.

Consequently, the fraction of such permutations among all permutations of length $n$ is asymptotically at least $1/\sqrt{\pi N/2}$. 
\end{theorem}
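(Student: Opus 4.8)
The plan is to assemble the bound from Lemma~\ref{bafna}, the bijection $C$, and the enumerative content of Theorem~\ref{thm1*}. By Lemma~\ref{bafna}, for every $u\in S_N$ we have $\mathrm{btd}(u)\ge \tfrac{N+1-\nu_{odd}(C(u))}{2}$, so whenever $\nu_{odd}(C(u))=0$---that is, whenever every cycle of $C(u)$ is even---we obtain $\mathrm{btd}(u)\ge \tfrac{N+1}{2}$, and since $\mathrm{btd}(u)$ is a nonnegative integer this forces $\mathrm{btd}(u)\ge \lceil (N+1)/2\rceil$. Hence the set
\[
\mathcal U:=\{u\in S_N:\ \text{every cycle of } C(u) \text{ is even}\}
\]
is contained in the set of permutations we wish to count, and it suffices to show $|\mathcal U|\ge N!\,q_{N+1}(\mathcal E)$; in fact equality will hold.

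First I would use that $C$ is a bijection from $S_N$ onto the permutations of $[N+1]$ of the form $s\cdot z$, where $s=(12\cdots(N+1))$ and $z$ ranges over the $(N+1)$-long cycles $\mathcal C_{N+1}$. Since left multiplication by the fixed permutation $s$ is itself a bijection of $S_{N+1}$, the assignment $z\mapsto s\cdot z$ is a bijection from $\mathcal C_{N+1}$ onto the image of $C$, and it carries the event ``$C(u)$ has all even cycles'' to the event ``$s\cdot z$ has all even cycle lengths.'' Consequently
\[
|\mathcal U|=\bigl|\{z\in\mathcal C_{N+1}:\ s\cdot z \text{ has all cycle lengths even}\}\bigr|.
\]
Because the number of $(N+1)$-long cycles equals $N!$, this count is exactly $N!$ times the probability that $s\cdot Z$ has all cycles even when $Z$ is drawn uniformly from $\mathcal C_{N+1}$. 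By Remark~(1) following the proof of Theorem~\ref{thm1*}, this probability is unchanged if one instead takes the product of two independent uniform $(N+1)$-cycles, so it equals $q_{N+1}(\mathcal E)$. Therefore $|\mathcal U|=N!\,q_{N+1}(\mathcal E)$, which yields the stated lower bound.

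For the asymptotic claim, the fraction of permutations of $[N]$ in question is at least $|\mathcal U|/N!=q_{N+1}(\mathcal E)$, and the final assertion of Theorem~\ref{thm1*} gives $q_{N+1}(\mathcal E)\sim (\pi (N+1)/2)^{-1/2}\sim (\pi N/2)^{-1/2}$ as $N\to\infty$, which is the claimed $1/\sqrt{\pi N/2}$. I expect no deep obstacle here, since the two hard inputs (the bijection $C$ and Lemma~\ref{bafna}) are quoted and $q_{N+1}(\mathcal E)$ is already computed; the only point demanding care is a parity bookkeeping remark rather than a genuine difficulty. Namely, a permutation of the odd-sized set $[N+1]$ cannot have all even cycle lengths when $N$ is even, so $q_{N+1}(\mathcal E)=0$ and the bound is vacuous in that case; the substantive content, and the asymptotic regime, concern odd $N$, for which $N+1$ is even and the first formula in \eqref{5*} applies. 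The main thing to verify explicitly is thus that the ``all even cycles'' condition is exactly $\nu_{odd}(C(u))=0$ and that it transports correctly across $C$, after which the identification with $q_{N+1}(\mathcal E)$ via Remark~(1) and the count $|\mathcal C_{N+1}|=N!$ is immediate.
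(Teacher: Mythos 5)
Your proposal is correct and follows essentially the same route as the paper: Lemma \ref{bafna} restricts attention to permutations $u$ with all cycles of $C(u)$ even, the bijection $C$ (together with $|\mathcal C_{N+1}|=N!$ and Remark (1)) converts the count into $N!\,q_{N+1}(\mathcal E)$, and the asymptotics come from Theorem \ref{thm1*}. Your added observation that the bound is vacuous for even $N$ (since $q_{N+1}(\mathcal E)=0$ when $N+1$ is odd) is a correct refinement the paper leaves implicit.
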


\begin{proof} Lemma \ref{bafna} shows that $\mathrm{btd}(u)\geq \lceil (N+1)/2 \rceil$ when all cycles of $C(u)$ are even. The existence of bijection $C$ shows that the number of such permutations 
equals the number of pairs $q$ that consist of a single cycle of length $N+1$ so that
$(12 \cdots (N+1) ) \cdot q$ has even cycles only. And, by the definition of $q_{N+1}(\mathcal E)$,  the number of such pairs is $N!\, q_{N+1}(\mathcal E)$, and their portion
 among all $N!$ permutations is asymptotically equal to $(\pi N/2)^{-1/2}$,
see Theorem \ref{thm1*}.
%Formula (\ref{worstcases}) then directly follows from the first equation in  (\ref{5*}) by setting $n=N-1$.  
%ha!
%The second claim of the theorem directly follows from...
\end{proof}

Note that Theorem \ref{lowerbound} is significant for two reasons.

First,  the result of Theorem \ref{lowerbound} is significantly stronger than what was known before. In particular,  it follows from Lemma \ref{bafna} that   $\mathrm{btd}(u)\geq \lfloor N/2 \rfloor$ holds when
 $C(u)$ has exactly one odd cycle. The number of such permutations $u$ is known, Boccara \cite{Boc}, B\'ona and Pittel \cite{ bona-pittel}, to be  $2N!/(N+2)$ if $N$ is even, and 0 if $N$ is odd,  so the ratio of such permutations among all  permutations of length $N$ is $2/(N+2)$ if $n$ is even. This lower bound is weaker than what we have just proved, by a factor of 
$c/\sqrt{N}$. Another lower bound, also significantly weaker than the one we just proved, can be deduced from the results in \cite{cranston}, 
where the authors considered {\em cut-and-paste} sorting, an operation that is stronger than block transposition sorting.  
However, that lower bound just shows that the relevant ratio is at least $\sqrt{N}/2^N$, which is again weaker than the result above. 

Second, and probably more importantly,  we have just proved the existence of a lot of permutations that are very close to the known worst case. The decreasing permutation 
$N\cdots 21$ takes $ \lceil(N+1)/2 \rceil $ block transpositions to sort, and as we mentioned before, no permutation of length $N$ is known to be harder to sort unless $N=4k+1\geq 17$, 
and even in that case, only one permutation is known to be harder to sort, and that permutation needs only one additional block transposition to be sorted \cite{elias}. 
 So, Theorem 
\ref{lowerbound} shows that at least $1/\sqrt{\pi N/2}$ of all permutations are either equal to the strongest
known construction for worst case, or are just one step away from it. 

\section{Further directions} 
It is natural to ask if other special cases Theorem \ref{thm1*} have application in sorting algorithms. The answer is in the affirmative.  A {\em block interchange} is like a block transposition except that the blocks that are swapped do not have to be consecutive.  Then a theorem in \cite{christie} shows that $(N+1-\nu(C(u))/2$ is {\em equal}
 to the
block interchange distance of the permutation $u$ of length $n$ to the identity.  It follows that if 
 $C(u)$ has at least one even cycle, then the block interchange distance and the block transposition distance of $u$ to the identity must be different. We hope to explore this fact in a subsequent  paper.

{\bf Appendix 1.\/} The argument mimicks the proof the formula for $q_N(\mathcal E)$, i.e. the case $d=2$. This time $N$ is divisible by $d$.
The equation \eqref{1e} holds for $q_N(\mathcal D)$, where
\begin{multline}\label{a1}
H_N(xz):=[y^N]\tfrac{(1-y^d)^{1/d}}{(1-(xyz)^d)^{1/d}}\\
=[y^N]\biggl(1+\sum_{k\ge 1}\binom{1/d}{k}(-y^d)^k\biggr)\cdot\biggl(\sum_{\ell\ge 0}\binom{-1/d}{\ell}(-(xyz)^d)^{\ell}\biggr)\\
=[y^N]\biggr(1-\tfrac{1}{d}\sum_{k\ge 1}\tfrac{y^{dk}}{k!}\prod_{r=1}^{k-1}(r-1/d)\biggr)\cdot\biggl(\sum_{\ell\ge 0}\tfrac{(xyz)^{d\ell}}{\ell!}\prod_{r'=0}^{\ell-1}(r'+1/d)\biggr)\\
=\tfrac{(xz)^N}{(N/d)!}\prod_{r'=0}^{N/d-1}(r'+1/d)-\tfrac{1}{d}\sum_{k\ge 1,\,dk+d\ell=N}\tfrac{(xz)^{d\ell}}{k!\ell!}\prod_{r=1}^{k-1}(r-1/d)\prod_{r'=0}^{\ell-1}(r'+1/d).
\end{multline}
So, according to the equation \eqref{1e} for $q_N(\mathcal D)$, and \eqref{9}, we evaluate
\begin{multline*}
[x^N] H_N(xz)\sum_{j\ge 0} x^j\bigl[z^j-(-1)^j(1-z)^j\bigr]\\
%=\tfrac{z^N}{(N/d)!}\prod_{r'=0}^{N/d-1}(r'+1/d)\\
=-\tfrac{1}{d}\sum_{k\ge 1, dk+d\ell=N,\atop j+d\ell=N}\tfrac{z^{d\ell}}{k!\ell!}\bigl[z^j-(-1)^j(1-z)^j\bigr]
\prod_{r=1}^{k-1}(r-1/d)\prod_{r'=0}^{\ell-1}(r'+1/d)\\
%=\tfrac{z^N}{(N/d)!}\prod_{r'=0}^{N/d-1}(r'+1/d)
=-\tfrac{1}{d}\sum_{k\ge 1,\, dk+d\ell=N}\tfrac{1}{k!\ell!}\,\bigl[z^N-(-1)^{dk}\,(1-z)^{dk}z^{d\ell}\bigr]
\prod_{r=1}^{k-1}(r-1/d)\prod_{r'=0}^{\ell-1}(r'+1/d).
\\
\end{multline*}
Integrating over $z\in [0,1]$ and multiplying by $-N$, we obtain
\begin{align*}
q_N(\mathcal D)&=\tfrac{N}{d(N+1)}\sum_{k\ge 1,\, dk+d\ell=N}\tfrac{1}{k!\ell!}\biggl(1-(-1)^{dk}\binom{N}{d\ell}^{-1}\biggr)\\
&\qquad\times \prod_{r=1}^{k-1}(r-1/d)\prod_{r'=0}^{\ell-1}(r'+1/d).
\\
%-\tfrac{N}{(N+1)(N/d)!}\prod_{r'=0}^{N/d-1}(r'+1/d).
\end{align*}
{\bf Appendix 2.\/} Proof of \eqref{new10*}. Using \eqref{4*} and \eqref{star}, we obtain
\begin{align*}
(N-1)!\,q_N(A)&=\tfrac{1}{N+1}\sum_{k=0}^N(-1)^{N-k-1}(N-k-1)k!\sum_{j\le k}\tfrac{(-1)^j}{j!}\\
&=\tfrac{1}{N+1}\sum_{j=0}^N\tfrac{(-1)^j}{j!}\biggl[\sum_{j\le k\le N}(-1)^{N-k-1} (N-k-1)\,k!\biggr].
\end{align*}
Here
\begin{multline*}
\sum_{j\le k\le N}(-1)^{N-k-1} (N-k-1)\,k!\\
=N (-1)^{N-1}\sum_{j\le k\le N}(-1)^k k! -(-1)^N\sum_{j+1\le k\le N+1}(-1)^k k!\\
=(-1)^{N-1}\biggl[(N+1)\sum_{j\le k\le N}(-1)^k k!-(-1)^j\,j!+(-1)^{N+1}\,(N+1)!\biggr].
\end{multline*}
Therefore
\begin{multline*}
(N-1)!\,q_N(A)=\tfrac{(-1)^{N-1}}{N+1}\sum_{j=0}^N\tfrac{(-1)^j}{j!}\\
\times\biggl[(N+1)\sum_{j\le k\le N}(-1)^k k!-(-1)^j\,j!+(-1)^{N+1}\,(N+1)!\biggr]\\
=(-1)^{N-1}\biggl[\sum_{j=0}^N\tfrac{(-1)^j}{j!}\sum_{j\le k\le N}(-1)^k\,k!-1\biggr]+N!\sum_{j=0}^N\tfrac{(-1)^j}{j!}\\
=(-1)^{N-1}\biggl[\sum_{j=0}^{N-1}\tfrac{(-1)^j}{j!}\sum_{j\le k\le N}(-1)^k\,k!\biggr]+N!\sum_{j=0}^N\tfrac{(-1)^j}{j!}\\
=(-1)^{N-1}\biggl[\sum_{j=0}^{N-1}\tfrac{(-1)^j}{j!}\sum_{j\le k\le N-1}(-1)^k\,k!\biggr]
-N!\sum_{j=0}^{N-1}\tfrac{(-1)^j}{j!}+N!\sum_{j=0}^N\tfrac{(-1)^j}{j!}\\
=(-1)^{N-1}\biggl[\sum_{j=0}^{N-1}\tfrac{(-1)^j}{j!}\sum_{j\le k\le N-1}(-1)^k\,k!-1\biggr]\\
=(-1)^{N-1}\sum_{j=0}^{N-2}\tfrac{(-1)^j}{j!}\sum_{j\le k\le N-1}(-1)^k\,k!.
\end{multline*}
\qed
\end{document}